\newtheorem{theorem}{Teorema}[section]
\title{Mathematical Model of the Impact of Chemotherapy and Anti-Angiogenic Therapy on Drug Resistance in Glioma Growth}
\author{
 Latifah Hanum \\
  Universitas Gadjah Mada\\
  Yogyakarta, Indonesia \\
  \texttt{latiifahhanum@mail.ugm.ac.id} \\
   \And
 Nanang Susyanto \\
  Universitas Gadjah Mada\\
  Yogyakarta, Indonesia \\
  \texttt{nanang\_susyanto@ugm.ac.id} \\
  \And
 Dwi Ertiningsih \\
  Universitas Gadjah Mada\\
  Yogyakarta, Indonesia \\
  \texttt{dwi\_ertiningsih@ugm.ac.id} \\
}
\begin{document}
\maketitle

\begin{abstract}
This research presents a mathematical model of glioma growth dynamics with drug resistance, capturing interactions among five cell populations: glial cells, sensitive glioma cells, resistant glioma cells, endothelial cells, and neuron cells, along with two therapy agent populations: chemotherapy and anti-angiogenic therapy. Glioma is a malignant tumor originating from glial cells, undergoes chemotherapy-induced mutations, leading to drug-resistant glioma cells. This not only impacts glioma cells but also normal cells. Combining chemotherapy and anti-angiogenic therapy, the model employs a Holling type II response function, considering optimal dosages for treatment optimization. Through analysis, three equilibrium are identified: two stable and one unstable equilibrium points. Numerical simulations, employing phase portraits and trajectory diagrams, illustrate the combined therapies impact on glioma cells. In summary, this concise model explores glioma dynamics and drug resistance, offering insights into the efficacy of combined therapies, crucial for optimizing glioma treatment.
\end{abstract}

\keywords{Drug-resistance \and Anti-angiogenic therapy \and Mathematical model \and Stability}

\section{Introduction}
Tumors cells are abnormal cells that are classified as either benign or malignant. Benign tumors exhibit characteristics that do not invade normal tissues, while malignant tumors can invade and spread throughout the entire body. Glioma is a type of aggressive brain tumor that originates from glial cells. From an epidemiological standpoint, gliomas occur across all age groups, but they are more commonly observed in adults, with males being more susceptible than females \cite{yang2022}. Solid tumors, such as gliomas in the brain, that grow larger than a critical size (1-2 $mm$ in diameter) need to recruit new blood vessels to supply the necessary oxygen and required nutrients for their survival and growth. This process involves the formation of new blood vessels \cite{cea2012}.

Blood vessels within gliomas are utilized for delivering nutrients and facilitating the migration of cancer cells. Glioma cells migrate alongside blood vessels, displacing the interactions between glial cells and blood vessels. Through this mechanism, glioma cells manage to extract nutrients from the bloodstream. This relocation process disrupts the function of glial cells, compromising the proper delivery of sufficient glucose and oxygen to neurons \cite{ye1999}. These consequences carry broader significance, impacting the well-being of neurons that rely on the support provided by glial cells for their nourishment, structural stability \cite{gless1955}, and the maintenance of the extracellular environment that envelops these neurons. Within this intricate neural network, neurons assume a pivotal role in the processing of sensory and internal signals, thereby making a substantial contribution to our cognitive and perceptual abilities. Malignant gliomas are characterized by significant blood vessel enhancement due to the process of angiogenesis, which plays a crucial role in tumor growth and colonization within the brain. The blood vessels in gliomas demonstrate the proliferation of endothelial cells, indicating a high-grade glioma \cite{wur2009}.

Despite these advancements, a promising option for cancer treatment is chemotherapy. Currently, 9 out of 10 chemotherapy failures are linked to drug resistance. In chemotherapy, after administering specific drugs, a significant number of patient tumor cells become resistant to those drugs. Consequently, drug resistance emerges as a serious issue in the field of cancer. Drug resistance often imposes inevitable limitations on the long-term effectiveness of therapies targeted at cancer patients. Significant efforts have been made to combat drug resistance and enhance patient survival. Although the underlying molecular and cellular mechanisms are intricate, several paradigmatic mechanisms of drug resistance have been established. It is widely accepted that the inherent heterogeneity within the population of cancer cells is believed to encompass cells that are sensitive to drugs as well as cells that are resistant to drugs \cite{man2017}.

One of the combination therapies in chemotherapy is anti-angiogenic therapy. Anti-angiogenic therapy is a method used to combat cancer with the aim of cutting off the supply of nutrients and oxygen to tumor cells through the blood vessels and preventing the formation of new blood vessels. Most approved anti-angiogenic agents for cancer treatment rely on targeting the \textit{Vascular Endothelial Growth Factor} (VEGF), as VEGF signaling is considered a primary promoter of angiogenesis. In addition to controlling angiogenesis, these drugs can enhance immunotherapy because VEGF also exhibits immunosuppressive functions, highlighting a potential target of anti-angiogenic therapy. Targeting blood vessels in brain tumors has become an extremely intriguing strategy, considering the high rate of endothelial proliferation, vascular permeability, and expression of proangiogenic growth factors \cite{coel2021}.

The author's interest in similar research stemmed from the realization that the process of angiogenesis can influence the response of tumor cell growth. This insight was then applied to cases of glioma tumors with drug resistance, which are highly malignant primary brain cancers. The model the author devised takes the form of a system of differential equations. This model depicts the dynamics of glioma brain tumor cell growth in the presence of drug resistance. The drug resistance, as a result of chemotherapy, renders tumor cells insensitive to chemotherapy drugs. The role of endothelial cells in ongoing therapy, particularly anti-angiogenic therapy, becomes crucial as they play a pivotal role in delivering supplies from anti-angiogenic agents. This has a significant impact on cancer since the required blood flow for tumor cells can be disrupted, leading to cell dormancy. The Dormancy phase describes the state where tumor cells cease to grow and become inactive for a certain period. As a result, the transition of tumor cells into dormancy is a mechanism that facilitates the survival and development of tumors. Therefore, gaining a better understanding of the proportional dynamics of resistant cells within gliomas under chemotherapy can lead to further therapeutic approaches. This approach holds the potential for effectiveness because by cutting off the blood supply, tumor cells cannot grow or develop. Consequently, the created model introduces additional compartments and new parameters to analyze their influence. 

This paper is organized into the following sections. In the next section, we'll introduce our model formulation. Section 3 discusses existence and stability of the equilibria. Section 4 will present numerical simulations to back up our theoretical findings. Finally, we'll wrap things up in Section 5 with our conclusions and discussions.

\section{Model Formulation}
\label{sec2}
Previous research on glioma growth models has been conducted by \cite{tro2020} is divided into five classes: glial cells, sensitive glioma cells, resistant glioma cells, neurons, and chemotherapy agents. Another journal introduced by \cite{pinho2012} discusses a model related to anti-angiogenic therapy for tumors in general. The author then attempts to create a combination of anti-angiogenic therapy and chemotherapy to observe their impact on gliomas in the presence of drug resistance, based on scientific research by \cite{wur2009}. The development performed in this study lies in the effect of subsequent therapy that influences the glioma tumor cell compartments. The crucial impact of angiogenesis on the growth of glioma brain tumors is referenced from the study by \cite{boc2018}, thus adding parameters related to dormancy in the tumor cell compartments as an effect of anti-angiogenic therapy. In the formulation of this mathematical model, a nonlinear system of differential equations is defined to represent the mathematical impact of anti-angiogenic therapy and chemotherapy on glioma tumors with drug resistance. 

The cells population is separated into five parts, i.e., the glial cells ($G_1$), the glioma sensitive cell ($G_2$), the glioma resistance cell ($G_3$), the endothelial cell ($G_4$), the neuron cell ($G_5$) and two agents of treatment are agent chemotherapy ($Q$), and agent anti-angiogenic ($Y$). The schematic interaction diagram which shows the interaction between the normal cells, tumour cells and agent therapy has been shown on Figure \ref{fig1}.

\begin{figure}
\centering
\includegraphics[width=6in]{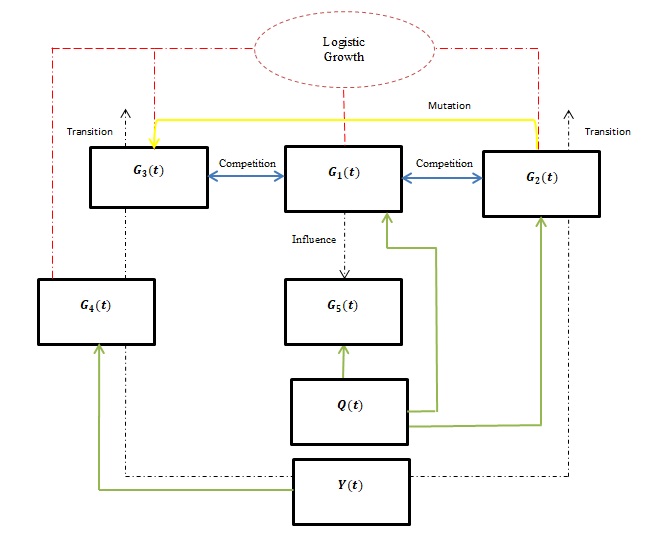}
\caption{The interaction diagram of glial cells, sensitive glioma cells, resistant glioma cells, endothelial cells, neurons, chemotherapy agents, and anti-angiogenic agents. }
\label{fig1}
\end{figure}

From the diagram, we have the system as follows:
\begin{eqnarray}
    \label{1}
    && \frac{d G_1}{d t}=p_1 G_1\left[1-\frac{G_1}{k_1}\right]-\kappa_1 G_1\left[G_2+G_3\right]-D_1\left(G_4, Y\right) \frac{G_1 Q}{A_1+G_1} \nonumber \\ 
    && \frac{d G_2}{d t}=p_2 G_2\left[1-\frac{G_2+G_3}{k_2+\chi G_4}\right]-\kappa_2 G_1 G_2-u F(Q) G_2-\rho F(Y) G_2 \nonumber \\
    &&\;\;\;\;\;\;\;\;\;\;\;\;-D_2\left(G_4, Y\right) \frac{G_2 Q}{A_2+G_2} \nonumber \\
    && \frac{d G_3}{d t}=p_3 G_3\left[1-\frac{G_2+G_3}{k_2+\chi G_4}\right]-\kappa_3 G_1 G_3+u F(Q) G_2 \nonumber \\
    &&\;\;\;\;\;\;\;\;\;\;\;\;-\rho F\left(Y\right) G_3 \\
    && \frac{d G_4}{d t}=\Phi\left[G_2+G_3\right]+p_4 G_4\left[1-\frac{G_4}{k_3}\right]-D_4 \frac{G_4 Y}{A_4+G_4} \nonumber \\
    && \frac{d G_5}{d t}=\omega\dot{G}_1 F\left(-\frac{\dot{G_1}}{k_1}\right) G_5-D_5\left(G_4, Y\right) \frac{G_5 Q}{A_5+G_5} \nonumber\\
    && \frac{d Q}{d t}=\phi-\left[\psi+c_1 \frac{G_1}{A_1+G_1}+c_2 +\frac{G_2}{A_2+G_2}+c_5\frac{G_5}{A_5+G_5}\right] Q \nonumber \\
    && \frac{d Y}{d t}=\delta-\left[\gamma+c_4 \frac{ G_4}{A_4+G_4}\right] Y \nonumber
\end{eqnarray}
with
\begin{eqnarray*}
    D_i(G_4,Y)=D_{i0}+D_{i1} G_4+D_{i2} Y , i=1,2,5
\end{eqnarray*}
and initial values $G_i\geq0,i=1,...,5$ $Q\geq0,Y\geq0$ for $t=0.$
The Heaviside function is generally defined as follows:
\begin{equation}
F(x)=
\begin{cases}
0, & x \leq 0 \\
1, & x > 0.
\end{cases}
\label{eq:custom-equation}
\end{equation}

 Variable $G_1$ is the glial cells concentration ($kg.m^{-3}$), $G_2$ is the drug-sensitive glioma cells concentration ($kg.m^{-3}$), $G_3$ is the drug-resistant glioma cells concentration ($kg.m^{-3}$), $G_4$ is the endothelial cells concentration ($kg.m^{-3}$), $G_5$ is the neurons concentration ($kg.m^{-3}$), $Q$ is the chemotherapeutic agent concentration ($mg.m^{-2}$), $Y$ is the anti-angiogenic agent concentration ($mg.m^{-2}$).
Parameters $p_i, i=1,2,3,4$ represent proliferation rate following logistic growth with carrying capacity $k_i, i=1,2,3,4$. Parameters ($\kappa_i, i=1,2,3$) are competition coefficient between glial and glioma cells. Parameter $\chi$ is the proportion of endothel cells responsible for the glioma angiogenesis. Parameter $u$ is mutation rate of sensitive glioma to resistent glioma. Parameter $\rho$ is transition rate of glioma cells into dormancy phase triggered by anti-angiogenic therapy. Parameter $\Phi$ is the rate of creation of glioma cells due to endothel cells. Neuron loss results from reduced glial concentration, while an increase has no effect, depicted by $\omega$. Parameters $D_{i 0}, i=1,2,5$, is the killing rate of chemotherapy on $X_i$ in the absence of $G_4$ and $Y$ respectively. Parameters $D_{i j}, i, j=1,2,5$, is the rate of increased killing on $G_i$ by chemotherapy agent per concentration of $G_4(j=1)$ and $Y(j=2)$. Parameter $D_4$ is the killing rate of anti-angiogenic therapy on $G_4$. 
$A_i, i=1,2,4,5$, is the Holling type II constant for $G_i$. Exponential increase in chemotherapy agent concentration due to infusion at rate $\phi$, followed by decrease related to body clearance of chemotherapy sent to glioma patients at clearance rate $\psi$. Exponential increase in anti-angiogenic agent concentration due to infusion at rate $\delta$, followed by decrease related to body clearance of anti-angiogenic sent to glioma patients at clearance rate $\gamma$. Parameters $c_i, i=1,2,4,5$, is the rate at which agent anti-angiogenic and chemotherapy agent combine with $G_i$. 

We obtain the non-dimensionalized model according to the procedure outlined in \cite{jia2020}, if we let  $g1=G_i/k_i, i=1,2,3,4, q=Q, y=Y, \beta_1=\kappa_1 k_2, \beta_2=\kappa_2 k_1, \beta_3=\kappa_3 k_1, \alpha=\omega k_1, a_i=A_i/k_i (i=1,2,4,5)$, $\mu=\Phi k_3/k_2, t=\chi k_3/k_2, d_{i0}=D_{i0}/k_i, d_{i1}=D_i k_3/k_i, d_{i2}=D_{i2}/k_i, (i=1,2,5), d_4=D_4/k_3,$ then our model becomes:
\begin{eqnarray}
    \label{3}
    && \frac{d g_1}{d t}=p_1 g_1[1-g_1]-\beta_1 g_1\left[g_2+g_3\right]-d_1\left(g_4, y\right) \frac{g_1 q}{a_1+g_1} \nonumber \\ 
    && \frac{d g_2}{d t}=p_2 g_2\left[1-\frac{g_2+g_3}{1+\tau g_4}\right]-\beta_2 g_1 g_2-u F\left(q\right) g_2 \nonumber \\
    &&\;\;\;\;\;\;\;\;\;\;\;\;-\rho F\left(y\right) g_2-d_2\left(g_4, y\right) \frac{g_2 q}{a_2+g_2} \nonumber \\
    && \frac{d g_3}{d t}=p_3 g_3\left[1-\frac{g_2+g_3}{1+\tau g_4}\right]-\beta_3 g_1 g_3+u F(q) g_2 \nonumber \\
    &&\;\;\;\;\;\;\;\;\;\;\;\;-\rho F\left(y\right) g_3  \\
    && \frac{d g_4}{d t}=\mu\left[g_2+g_3\right]+p_4 g_4 [1-g_4]-d_4\frac{ g_4 y}{a_4+g_4} \nonumber \\
    && \frac{d g_5}{d t}=\alpha \dot{g}_1 F\left(- \dot{g_1}\right) g_5-d_5\left(g_4, y\right) \frac{g_5 q}{a_5+g_5} \nonumber\\
    && \frac{d q}{d t}=\phi-\left[\psi+c_1 \frac{g_1}{a_1+g_1}+c_2 \frac{g_2}{a_2+g_2}+c_5 \frac{g_5}{a_5+g_5}\right] q \nonumber \\
    && \frac{d y}{d t}=\delta-\left[\gamma+c_4 \frac{ g_4}{a_4+g_4}\right] y \nonumber
\end{eqnarray}
with
\begin{eqnarray*}
    d_i(g_4,y)=d_{i0}+d_{i1} g_4+d_{i2} y , i=1,2,5
\end{eqnarray*}
and initial values $g_i\geq0,i=1,...,5$ $q\geq0,y\geq0$ for $t=0.$

In the following section, we will explore the characteristics of the solution for the model (\ref{3}), encompassing its existence and local stability of the equilibria.

\section{Existence And Local Stability Of Equilibria}
\label{sec:3}
\subsection{The Existence of Equilibria}
The equilibrium point of the glioma model, referred to as $E_0=(0,0,0,0,0,\frac{\phi}{\varphi},\frac{\delta}{\gamma})$, shows the nonexistence of the all cells. This state at which the system's variables remain constant over time.  

 The others equilibria
\begin{eqnarray*}
E_1=(g_1^b,0,0,g_4^b,0,q^b,y^b)
\end{eqnarray*}
where 
\begin{eqnarray*}
    q^b&=&\frac{p_1[1-g_1^b][a_1+g_1^b]}{d_{10}+d_{11} g_4^b+d_{12} y^b} \label{xx}\\
    y^b&=&\frac{\delta[a_4+g_4^b]}{a_4 \gamma+c_4 g_4^b+g_4^b \gamma} \label{x}
\end{eqnarray*}
and furthermore, to determine the value of $g_4^b$ and $g_1^b$ when $g_2=0$ and $g_3=0$,
we obtain at least one positive root for the quadratic equation: 
\begin{equation}
    p_4(c_4+\gamma) g_4^2+p_4((a_4-1) \gamma-c_4) g_4-a_4 \gamma p_4+\delta d_4=0 
\end{equation}
by applying Descarte's rule of signs, 
\begin{equation}
g_4^b=\frac{\left(-\gamma a_4+\gamma+c_4\right)+\sqrt{\left(\gamma a_4-\gamma-c_4\right)^2-4\left(\gamma+c_4\right)\left[\delta\left(d_4\right) / p_4-a_4 \gamma \right]}}{2\left(\gamma+c_4\right)}
\end{equation}
for the existence of the equilibriua $E_1$ note that if $\delta\left(d_4\right)<a_4 \gamma p_4$, implying $-\gamma a_3+\gamma+c_4 < \left(\gamma a_4-\gamma-c_4\right)^2 - 4\left(\gamma+c_4\right)\left[\delta\left(d_4\right) / p_4-a_4 \gamma \right]$.
Next, by simple calculation we have at least one positive root  for the
quadratic equation:
\begin{equation} \label{6}
     p_1(c_1+\psi) g_1^{2}+((a_1-1) \psi-c_1) p_1 g_1-a_1 p_1 \psi+\phi d_1=0 
 \end{equation}
then descarte's rule of signs yields a quadratic Equation (\ref{6}) is given by:
    \begin{equation} \label{aww}
g_1^b=\frac{\left(\psi+c_1-\psi a_1\right)+\left[\left(-\psi-c_1+\psi a_1\right)\right]^2-4\left(\psi+c_1\right)\left[d_1\left(g_4, y\right) \phi/ p_1-\psi a_1\right]^{1 / 2}}{2\left(\psi+c_1\right)}.
\end{equation}
Therefore, if $\phi/\psi<p_1 a_1/(d_{10}+d_{11} g_4^b+d_{12} y^b)$, $g_1^b>0$ always exists, as well as for $q^b$.

Next, resistant glioma equilibria represent  where tumor cells still persist within the host individual affected by glioma. In other words, $g_2 \neq 0$ or $g_3 \neq 0$. When affected by glioma, equilibrium points in the population occur in two conditions: first, when $g_2>0$ and $g_3=0$, if $g_2=0$, leading to the values $g_2=0$ and $g_3=0$ when the equilibrium point is a free glioma equilibrium point. Thus, we will investigate the condition where $g_2=0$ and $g_3>0$. Since $g_3\neq 0$,  we found equilibrium $E_2=(0,0,g_3^r,g_4^r,0,q^r,y^r)$ where:
\begin{eqnarray*}
       g_3^r&=&\frac{(g_4^r \tau + 1)(p_3-\rho)}{p_3}\\
       q^r&=&\frac{\phi}{\psi}\\
        y^r &=&  \frac{{\delta  (a_4 + g_4^r)}}{{(c_4 + \gamma)  g_4^r + a_4  \gamma}}
\end{eqnarray*}
and $g_4^r$ is positive solutions of the following equation:
\begin{eqnarray}\label{gogo}
l_1 g_4^3 + l_2 g_4^2 + l_3 g_4 + l_4 = 0
\end{eqnarray}
where $l_i$, for $i=1,2,3,4$, are defined as follows:
\begin{eqnarray*}
l_1 &=& 1\\
l_2 &=& \frac{((-\mu \tau + (a_4 - 1) p_4) p_3 + \mu \rho \tau)}{p_3 p_4}\\
l_3 &=& \frac{(((-a_4 \tau - 1) \mu + d_4 y - a_4 p_4) p_3 + \rho \mu (a_4 \tau + 1))}{p_3 p_4}\\
l_4 &=& \frac{a_4 \mu (\rho - p_3)}{p_3 p_4}.
\end{eqnarray*}
The solutions of
equation (\ref{gogo}) are involves the following steps as provided by [19], resulting in roots:
\begin{eqnarray*}
    g_{4,1}&=\frac{\sqrt[3]{A+\sqrt{A^2+4 B^3}}}{3 \sqrt[3]{2}}-\frac{\sqrt[3]{2} B}{3 \sqrt[3]{A^2+\sqrt{A^2+4 B^3}}}-\frac{l_2}{3}\\
    g_{4,2}&=-\frac{(1-i \sqrt{3}) \sqrt[3]{A+\sqrt{A^2+4 B^3}}}{6 \sqrt[3]{2}}+\frac{(1+i \sqrt{3}) \sqrt[3]{2} B}{6 \sqrt[3]{A+\sqrt{A^2+4 B^3}}}-\frac{l_2}{3}\\
    g_{4,3}&=-\frac{(1+i \sqrt{3}) \sqrt[3]{A+\sqrt{A^2+B}}}{6 \sqrt[3]{2}}+\frac{(1-i \sqrt{3}) \sqrt[3]{2} B}{6 \sqrt[3]{A+\sqrt{A^2+B}}}-\frac{l_2}{3}
\end{eqnarray*}
with
\begin{align*}
    A&=9 l_2 l_3-27 l_4-2 l_2^3 \\
    B&=3 l_3-l_2^2.
\end{align*}

Next, we will analyze the real roots of the polynomial and the conditions under which the real roots of the polynomial are negative. Let's assume:
$$
P=\sqrt[3]{A+\sqrt{A^2+4 B^3}}.
$$
The condition for $P$ to be real is $A^2 \geq B$. With this, we have:
\begin{eqnarray*}
     g_{4,1}&=&\frac{P}{3 \sqrt[3]{2}}-\frac{\sqrt[3]{2} B}{3 P}-\frac{l_2}{3}\\
    g_{4,2}&=&-\frac{(1-i \sqrt{3}) P}{6 \sqrt[3]{2}}+\frac{(1+i \sqrt{3}) \sqrt[3]{2} B}{6 P}-\frac{l_2}{3}\\
    g_{4,3}&=&-\frac{(1+i \sqrt{3}) P}{6 \sqrt[3]{2}}+\frac{(1-i \sqrt{3}) \sqrt[3]{2} B}{6 P}-\frac{l_2}{3}.
\end{eqnarray*}
The equation $g_4^r$ is one positive real root. For this root to be positive, the condition $\frac{P}{3 \sqrt[3]{2}}>\frac{\sqrt[3]{2} B}{3 P}+\frac{l_2}{3}$ must be satisfied. 

\subsection{The Stability of equilibria}
In this section, we're going to investigate the stability of the equilibria by examining the eigenvalues of the Jacobian matrices in system (\ref{3}). We can determine stability by looking at whether the real parts of the eigenvalues of the Jacobian matrix, calculated at a specific equilibrium point, are positive or negative.  

For the local stability of glioma-free equilibrium point $E_0, E_1$, anda glioma resistent equilibrium point $E_2$, we have the following theorem.
\begin{theorem}
    The first glioma-free equilibrium point, denoted as $E_0 = (0, 0, 0, 0, 0, \frac{\phi}{\varphi}, \frac{\delta}{\gamma})$, exists and is locally asymptotically stable if the following conditions are: $\phi>\frac{p_1 \psi a_1 \gamma}{(d_{10} + d_{12} \delta)}$, $\phi>\frac{(p_2-u-\rho)\psi a_2 \gamma }{(d_{20}+d_{22} \delta)}$, $\rho>p_3$, $\delta>\frac{p_4 \gamma a_4}{d_4}$. These conditions ensure the stability of the equilibrium point $E_0$.
\end{theorem}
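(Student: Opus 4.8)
The plan is to verify the existence of $E_0$ by direct substitution and then to establish local asymptotic stability via the indirect Lyapunov (linearization) method, exploiting a triangular structure of the Jacobian at $E_0$. Existence is immediate: setting $g_1=g_2=g_3=g_4=g_5=0$ in system (\ref{3}) annihilates the first five right-hand sides identically, since every term there carries a factor of some $g_i$, while the last two equations collapse to $\phi-\psi q=0$ and $\delta-\gamma y=0$, giving $q^\ast=\phi/\psi$ and $y^\ast=\delta/\gamma$, both strictly positive; hence $E_0$ exists unconditionally (the symbol $\varphi$ in the statement is $\psi$). Before linearizing I would dispose of the non-smooth terms: because $q^\ast>0$ and $y^\ast>0$, there is a neighbourhood of $E_0$ on which $F(q)\equiv 1$ and $F(y)\equiv 1$, so those Heaviside factors may be replaced by $1$; the only genuinely non-differentiable remaining term is $\alpha\,\dot g_1\,F(-\dot g_1)\,g_5$ in the $\dot g_5$ equation, and since $\dot g_1$ vanishes at $E_0$ and is $O(\|(g_1,\dots,y)-E_0\|)$ nearby, this term is $O(\|\cdot\|^2)$, contributes nothing to the Jacobian, and is Lipschitz with arbitrarily small constant near $E_0$, so the classical linearization theorem applies.

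The heart of the argument is the computation of $J(E_0)$ in the variable order $(g_1,g_2,g_3,g_4,g_5,q,y)$. The key observation is that every derivative of $\dot g_i$ with respect to a variable appearing later in this ordering vanishes at $E_0$: in the $\dot g_1,\dot g_2,\dot g_5$ rows all cross terms carry a cell factor that is zero at $E_0$; in $\dot g_3$ the only surviving off-diagonal entry is $\partial(\dot g_3)/\partial g_2 = u$, which sits below the diagonal; in $\dot g_4$ the surviving off-diagonal entries $\partial(\dot g_4)/\partial g_2 = \partial(\dot g_4)/\partial g_3 = \mu$ also lie below the diagonal; and in $\dot q,\dot y$ every nonzero cell-partial lands in a column strictly to the left of the diagonal. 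Therefore $J(E_0)$ is lower triangular and its spectrum is the set of diagonal entries,
\[
\lambda_1=p_1-\frac{(d_{10}+d_{12}y^\ast)q^\ast}{a_1},\qquad \lambda_2=p_2-u-\rho-\frac{(d_{20}+d_{22}y^\ast)q^\ast}{a_2},\qquad \lambda_3=p_3-\rho,
\]
\[
\lambda_4=p_4-\frac{d_4 y^\ast}{a_4},\qquad \lambda_5=-\frac{(d_{50}+d_{52}y^\ast)q^\ast}{a_5},\qquad \lambda_6=-\psi,\qquad \lambda_7=-\gamma,
\]
with $q^\ast=\phi/\psi$ and $y^\ast=\delta/\gamma$.

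To finish, observe that $\lambda_5,\lambda_6,\lambda_7$ are negative for all positive parameters; $\lambda_3<0$ is exactly $\rho>p_3$; $\lambda_4<0$ is exactly $\delta>p_4\gamma a_4/d_4$; and substituting $q^\ast,y^\ast$ into $\lambda_1<0$ and $\lambda_2<0$ and clearing denominators yields the first two inequalities of the statement (with the denominators read as $d_{i0}\gamma+d_{i2}\delta$). Since all seven eigenvalues are real, their real parts are negative precisely when the $\lambda_i$ are negative, so under the four hypotheses $J(E_0)$ is Hurwitz and $E_0$ is locally asymptotically stable by the indirect Lyapunov theorem; one may also remark that if $p_2-u-\rho\le 0$ then $\lambda_2<0$ automatically and the second condition is vacuously satisfied.

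The only genuinely delicate point is the one flagged above — rigorously justifying that the discontinuous $F$-terms do not spoil the linearization, i.e., the higher-order estimate on the $F(-\dot g_1)g_5$ term together with the local constancy of $F(q)$ and $F(y)$ near $E_0$. Everything after that is a bookkeeping computation of the Jacobian and reading off a triangular spectrum, so I expect no further obstruction.
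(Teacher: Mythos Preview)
Your argument is correct and follows essentially the same route as the paper's own proof: linearize at $E_0$, observe that the Jacobian is lower triangular, read the seven eigenvalues off the diagonal, and translate the sign conditions $\lambda_i<0$ into the four stated inequalities. Your treatment is in fact more careful than the paper's, since you justify why the Heaviside factors do not obstruct linearization near $E_0$ and you correctly flag that the denominators in the first two hypotheses should be $d_{i0}\gamma+d_{i2}\delta$ rather than $d_{i0}+d_{i2}\delta$.
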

\begin{proof}
To determine the stability of $E_0$ one may compute the variational matrix of system (\ref{3}) about $E_0$ given by
\begin{eqnarray*}
D\mathbf{f}{E_0} &=& \left[\begin{array}{ccccccc}
m_{11} & 0 & 0 & 0 & 0 & 0 & 0 \\
0 & m_{22} & 0 & 0 & 0 & 0 & 0 \\
0 & u & m_{33} & 0 & 0 & 0 & 0 \\
0 & \mu & \mu & m_{44} & 0 & 0 & 0 \\
0 & 0 & 0 & 0 & m_{55} & 0 & 0 \\
-\frac{{c_1\phi}}{{\psi a_1}} & -\frac{{c_2\phi}}{{\psi a_2}} & 0 & 0 & -\frac{{c_5\phi}}{{\psi a_5}} & m_{66} & 0 \\
0 & 0 & 0 & -\frac{{c_4\delta}}{{\gamma a_4}} & 0 & 0 & m_{77}
\end{array}\right]. \\
\end{eqnarray*}

The eigenvalues of the Jacobian matrix of system (\ref{3}) at equilibrium point $E_0 = (0, 0, 0, 0, 0, \frac{\phi}{\varphi}, \frac{\delta}{\gamma})$ are as follows:

\begin{eqnarray*}
&&\lambda_1 = p_1 - \frac{{(d_{12}\delta/\gamma + d_{10})\phi}}{{\psi a_1}} \\
&&\lambda_2 = p_2 - u -\rho - \frac{{(d_{22}\delta/\gamma + d_{20})\phi}}{{\psi a_2}} \\
&&\lambda_3 = p_3 - \rho \\
&&\lambda_4 = p_4 - \frac{{d_4\delta}}{{\gamma a_4}} \\
&&\lambda_5 = -\frac{{(d_{52}\delta/\gamma + d_{50})\phi}}{{\psi a_5}} \\
&&\lambda_6 = -\psi \\
&&\lambda_7 = -\gamma. \label{4.41}
\end{eqnarray*}
Furthermore, it is necessary that $\phi>\frac{p_1 \psi a_1 \gamma}{(d_{10} + d_{12} \delta)}$, $\phi>\frac{(p_2-u-\rho)\psi a_2 \gamma }{(d_{20}+d_{22} \delta)}$, $\rho>p_3$, and $\delta>\frac{p_4 \gamma a_4}{d_4}$ where these results are obtained through $\lambda_1<0$, $\lambda_2<0$, $\lambda_3<0$ and $\lambda_4<0$. Given that the parameters $\phi$, $\psi$, and $\gamma$ are positive, we know that $\lambda_5$, $\lambda_6$, and $\lambda_7$ are all negative. We consider $p_1=0.0068$, $p_2=0.012$, $p_3=0.002$, $p_4=0.002$, $\psi=0.01813$, $\gamma=0.136$, $\rho=0.01$, $d_{10}=4.7\times10^{-8}$, $d_{12}=3.9\times10^{-8}$, $d_{20}=7.8\times10^{-2}$, $d_{22}=7.5$, $d_4=0.71$, $u=0.01$, and $a_1=a_2=a_4=1$ (Table). With these values, we obtain that $E_0$ is linearly
asymptotically stable for $\phi>356.22$. In other words, the chemotherapy agent kills all cells, They will never recuperate. However, the non-cell state's stability is only ensured at an exceptionally high infusion rate $\phi$. Consequently, the equilibrium point $E_0$ is unstable.
\end{proof}

\begin{theorem}
    If the following conditions are satisfied:
\begin{enumerate}
    \item $2p_{4}g_{4}^b\gamma + \frac{2p_{4}g_{4}^{2b}c_{4}}{a_{4} + g_{4}^b} + \frac{d_{4}y_{a_{4}}^b\gamma}{(a_{4} + g_{4}^b)^{2}} > p_{4}\gamma + \frac{p_{4}c_{4}g_{4}^b}{a_{4} + g_{4}^b}$
    \item $d_2(g_4^b,y^b) q^b>(p_2 - \beta_2 g_1^b - u - \rho) a_2$
    \item $p_3<\beta_3 g_1^b  - \rho$
    \item $2 \psi p_1 g_1^b + \frac{2 c_1 g_1^{2b} p_1}{a_1 + g_1^b} + \frac{a_1 d_{11} g_4^b q^b (a_1 \psi + c_1 g_1^b + g_1^b \psi)}{(a_1 + g_1^b)^3}+\frac{a_1 d_{12} q^b y^b (a_1 \psi + c_1 g_1^b + g_1^b \psi)}{(a_1 + g_1^b)^3} + \frac{a_1 d_{10} q^b (a_1 \psi + c_1 g_1^b + g_1^b \psi)}{(a_1 + g_1^b)^3} > \psi p_1 +\frac{c_1 g_1^b p_1}{a_1 + g_1^b}$
\end{enumerate}
Then, the second glioma-free equilibrium point $E_1=(g_1^b,0,0,g_4^b,0,q^b,y^b)$ in system (\ref{3}) is locally asymptotically stable.
\end{theorem}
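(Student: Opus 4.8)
The plan is to mimic the stability analysis carried out for $E_0$: form the variational matrix $D\mathbf{f}{E_1}$ of system (\ref{3}) at $E_1=(g_1^b,0,0,g_4^b,0,q^b,y^b)$ and control the sign of the real parts of its eigenvalues. Working at $E_1$ is still tractable because the vanishing components $g_2^b=g_3^b=g_5^b=0$ sparsify the Jacobian: every term of the $\dot g_2$, $\dot g_3$, $\dot g_5$ equations carries a factor $g_2$, $g_3$ or $g_5$ except the couplings $uF(q)g_2$ in $\dot g_3$ and $\mu(g_2+g_3)$ in $\dot g_4$, so the $g_2$-row collapses to $(0,m_{22},0,\dots,0)$, the $g_3$-row to $(0,u,m_{33},0,\dots,0)$, and the $g_5$-row to $(0,\dots,0,m_{55},0,0)$. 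Before differentiating I would dispose of the Heaviside terms: at $E_1$ one has $q^b,y^b>0$, so $F(q)$ and $F(y)$ are locally constant and contribute nothing to the linearization, while $\alpha\dot g_1 F(-\dot g_1)g_5$ carries the factor $g_5=0$, so the $g_5$-row is as stated irrespective of how $F$ is read at $\dot g_1=0$.

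Next I would expand $\det(D\mathbf{f}{E_1}-\lambda I)$ by successive cofactor expansion along these sparse rows: the $g_5$-row peels off $\lambda=m_{55}=-d_5(g_4^b,y^b)q^b/a_5<0$, the $g_2$-row peels off $\lambda=m_{22}$, and the $g_3$-row peels off $\lambda=m_{33}$. What is left is the $4\times4$ block in the variables $(g_1,g_4,q,y)$, in which $g_1$ is coupled only to $q$, $g_4$ only to $y$, and the cross-entries $\partial\dot g_1/\partial g_4$, $\partial\dot g_1/\partial y$ all lie in one row. A short expansion along the $q$-column then shows that this determinant factors as
\[
\big[(\lambda-m_{11})(\lambda-m_{66})-m_{16}m_{61}\big]\,\big[(\lambda-m_{44})(\lambda-m_{77})-m_{47}m_{74}\big],
\]
so the spectrum of $D\mathbf{f}{E_1}$ is $\{m_{22},m_{33},m_{55}\}$ together with the roots of the $\{g_1,q\}$ quadratic and the $\{g_4,y\}$ quadratic, which can be treated independently.

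It then remains to apply the $2\times2$ Routh--Hurwitz criterion to each quadratic. For the $\{g_4,y\}$ factor one needs $m_{44}+m_{77}<0$ and $m_{44}m_{77}-m_{47}m_{74}>0$; but $m_{77}=-\big(\gamma+c_4 g_4^b/(a_4+g_4^b)\big)<0$ always and $m_{47}m_{74}>0$ (a product of two negative entries), so the determinant inequality forces $m_{44}m_{77}>0$, hence $m_{44}<0$, hence the trace inequality for free — a single inequality, $m_{44}m_{77}-m_{47}m_{74}>0$, suffices, and substituting $m_{44}$, $m_{77}$, $m_{47}$, $m_{74}$ and clearing the powers of $(a_4+g_4^b)$ reproduces condition~1. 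The same argument on the $\{g_1,q\}$ factor, with $m_{66}=-\big(\psi+c_1 g_1^b/(a_1+g_1^b)\big)$ and $d_1(g_4^b,y^b)=d_{10}+d_{11}g_4^b+d_{12}y^b$, reproduces the determinant inequality recorded as condition~4, while the already-isolated eigenvalues give $m_{22}<0\Leftrightarrow$ condition~2, $m_{33}<0\Leftrightarrow$ condition~3, and $m_{55}<0$ holds unconditionally. The conceptual crux — and the place where a hurried write-up can go wrong — is noticing the $\{g_1,q\}\oplus\{g_4,y\}$ decoupling of the $4\times4$ core; without it one is left with a full quartic and four Routh--Hurwitz conditions rather than two transparent $2\times2$ problems. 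The remaining labour (quotient-rule differentiation of the Holling-type-II terms and of the composite killing coefficients $d_i(g_4,y)$, and the denominator-clearing algebra that brings the two determinant inequalities into the exact polynomial forms stated) is routine but error-prone.
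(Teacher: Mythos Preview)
Your proposal is correct and follows essentially the same route as the paper: linearize at $E_1$, exploit the sparsity from $g_2^b=g_3^b=g_5^b=0$ to peel off the three simple eigenvalues $m_{22},m_{33},m_{55}$, and then factor the remaining $4\times4$ characteristic polynomial into the two $2\times2$ quadratics for the $\{g_1,q\}$ and $\{g_4,y\}$ pairs. Your observation that the one-sided placement of the cross-entries $e_{14},e_{17}$ is what makes the factorization go through, and that for each $2\times2$ block the determinant inequality alone suffices (since $m_{66},m_{77}<0$ automatically and the determinant then forces $m_{11},m_{44}<0$), is exactly the mechanism the paper uses, only stated more crisply.
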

\begin{proof}
    The stability of $E_1$ can be determined by analyzing the eigenvalues of the Jacobian Matrix, which is the result of linearizing system (\ref{3}) around the equilibrium point $E_1=(g_1^b,0,0,g_4^b,0,q^b,y^b)$.
  \begin{equation}
D\mathbf{f}{E_1} = \left[
\begin{array}{ccccccc}
e_{11} & e_{12} & e_{13} & e_{14} & 0 & e_{16} & e_{17} \\
0 & e_{22} & 0 & 0 & 0 & 0 & 0 \\
0 & e_{32} & e_{33} & 0 & 0 & 0 & 0 \\
0 & e_{42} & e_{43} & e_{44} & 0 & 0 & e_{47} \\
0 & 0 & 0 & 0 & e_{55} & 0 & 0 \\
e_{61} & e_{62} & 0 & 0 & e_{65} & e_{66} & 0 \\
0 & 0 & 0 & e_{74} & 0 & 0 & e_{77}
\end{array}
\right].
\end{equation}
    The obtained eigenvalues are as follows: 
\begin{eqnarray*}
    \lambda_1&=&p_2 - \beta_2 g_1^b - u - \rho-\frac{d_2(g_4^b,y^b) q^b}{a_2} \\
    \lambda_2&=&p_3-\beta_3 g_1^b  - \rho\\
    \lambda_3&=&-\frac{d_5(g_4^b,y^b) q^b}{a_5}.
\end{eqnarray*}
 Parameters $d_5,a_5>0$ so $\lambda_3<0$. In order to ensure the stability of $E_1=(g_1^b,0,0,g_4^b,0,q^b,y^b)$ it is necessary that
$d_2(g_4^b,y^b) q^b>(p_2 - \beta_2 g_1^b - u - \rho) a_2$ and $p_3<\beta_3 g_1^b  - \rho$, where these results are obtained through $\lambda_1<0$ and $\lambda_2<0$. The values of the dimensionless parameters are positives, then the eigenvalues  $\lambda_1$, $\lambda_2$ are negatives.

Next, for $\lambda_4$ and $\lambda_5$, we obtain the following:

\begin{eqnarray}\label{uu}
    \lambda^2 - v_1\lambda + v_2
\end{eqnarray}
where:
\begin{eqnarray*}
    v_1 &=& e_{77} + e_{44}\\
    v_2 &=& e_{77}e_{44} - e_{74}e_{47}\\
    &=& \left(2 \gamma p_4 g_4^b - \gamma p_4 + \frac{\gamma d_4 y^b a_4}{(a_4 + g_4^b)^2} + \frac{2 c_4 g_4^{2b} p_4}{a_4 + g_4^b} - \frac{c_4 g_4^b p_4}{a_4 + g_4^b}\right)\\
    &&-\left(\frac{c_4 a_4 y^b d_4 g_4^b}{(a_4 + g_4^b)^3}\right)\\
    &=& 2 \gamma p_4 g_4^b - \gamma p_4 + \frac{\gamma d_4 y^b a_4}{(a_4 + g_4^b)^2} + \frac{2 c_4 g_4^{2b} p_4}{a_4 + g_4^b} - \frac{c_4 g_4^b p_4}{a_4 + g_4^b}.
\end{eqnarray*}
Based on the known values, $e_{74} < 0$ and $e_{47} < 0$, so $e_{74} e_{47} > 0$. Then, if 
\begin{eqnarray*}
2p_{4}g_{4}\gamma + \frac{2p_{4}g_{4}^{2}c_{4}}{a_{4} + g_{4}} + \frac{d_{4}y_{a_{4}}\gamma}{(a_{4} + g_{4})^{2}} > p_{4}\gamma + \frac{p_{4}c_{4}g_{4}}{a_{4} + g_{4}},
\end{eqnarray*}
then $e_{77}e_{44} > 0$. Furthermore, we have:
\begin{eqnarray*}
&&2 \gamma p_4 g_4 - \gamma p_4 + \frac{\gamma d_4 y a_4}{(a_4 + g_4)^2} + \frac{2 c_4 g_4^2 p_4}{a_4 + g_4} - \frac{c_4 g_4 p_4}{a_4 + g_4} + \frac{c_4 a_4 y d_4 g_4}{(a_4 + g_4)^3}> \frac{c_4 a_4 y d_4 g_4}{(a_4 + g_4)^3}
\end{eqnarray*}
so, $e_{77}e_{44} > e_{74}e_{47}$. 
Now, from Equation (\ref{uu}), we obtain the eigenvalues $\lambda_{4,5}$ as follows:
$$
\lambda_{4,5} = \frac{-v_1 \pm \sqrt{v_1^2 - 4(1)v_2}}{2(1)} = \frac{-v_1 \pm \sqrt{v_1^2 - 4 v_2}}{2}.
$$

The real part of $\lambda_4$ and $\lambda_5$ is always negative because if $v_1^2 - 4 v_2 > 0$, then $v_1^2 > v_1^2 - 4 v_2$, which implies:

$$
\begin{aligned}
& \lambda_4 = \frac{-v_1 + \sqrt{v_1^2 - 4 v_2}}{2} < 0 \\
& \lambda_5 = \frac{-v_1 - \sqrt{v_1^2 - 4 v_2}}{2} < 0 .
\end{aligned}
$$
Additionally, if $v_1^2 < 4 v_2$, then $\sqrt{v_1^2 - 4 v_2}$ is imaginary, and since $v_1 > 0$, $-v_1 < 0$. Moving on to $\lambda_6$ and $\lambda_7$, we have:
\begin{eqnarray}\label{ux}
    \lambda^2 - w_1\lambda + w_2
\end{eqnarray}
where:
\begin{eqnarray*}
    w_1 &=& e_{66} + e_{11}\\
    w_2 &=& e_{66}e_{11} - e_{16}e_{61}\\
    &=& \left(2 \psi p_1 g_1^b + \frac{2 c_1 {g_1^b}^2 p_1}{a_1 + g_1^b} - \psi p_1 - \frac{c_1 g_1^b p_1}{a_1 + g_1^b} + \frac{a_1 d_{11} g_4^b q^b (a_1 \psi + c_1 g_1^b + g_1^b \psi)}{(a_1 + g_1^b)^3}\right.\\
    &&+ \left.\frac{a_1 d_{12} q^b y^b (a_1 \psi + c_1 g_1^b + g_1^b \psi)}{(a_1 + g_1^b)^3} + \frac{a_1 d_{10} q^b (a_1 \psi + c_1 g_1^b + g_1^b \psi)}{(a_1 + g_1^b)^3}\right)\\
    &&- \left(\frac{(d_{11} g_4^b + d_{12} y^b + d_{10}) g_1^b c_1 a_1 q^b}{(a_1 + g_1^b)^3}\right)\\
    &=& \left(2 \psi p_1 g_1^b + \frac{2 c_1 {g_1^b}^2 p_1}{a_1 + g_1^b} - \psi p_1 - \frac{c_1 g_1^b p_1}{a_1 + g_1^b} + \frac{q^b a_1 (d_1(g_4^b,y^b)(a_1 \psi + c_1 g_1^b + g_1^b \psi))}{(a_1 + g_1^b)^3}\right)\\
    &&- \left(\frac{(d_1(g_4^b,y^b) g_1^b c_1 a_1 q^b}{(a_1 + g_1^b)^3}\right)\\
    &=& 2 \psi p_1 g_1^b + \frac{2 c_1 {g_1^b}^2 p_1}{a_1 + g_1^b} - \psi p_1 - \frac{c_1 g_1^b p_1}{a_1 + g_1^b} + \frac{q^b a_1(d_1(g_4^b,y^b)(a_1 \psi + g_1^b \psi))}{(a_1 + g_1^b)^3}.
\end{eqnarray*}

Based on the known values, $e_{16}$ and $e_{61}$ are negative, so $e_{16}e_{61} > 0$. If
\begin{eqnarray*}
&&2 \psi p_1 g_1^b + \frac{2 c_1 g_1^{2b} p_1}{a_1 + g_1^b} + \frac{q^b a_1 (d_1(g_4^b,y^b)(a_1 \psi + c_1 g_1^b + g_1^b \psi))}{(a_1 + g_1^b)^3} > \psi p_1 + \frac{c_1 g_1^b p_1}{a_1 + g_1^b},
\end{eqnarray*}

then $e_{66}e_{11} > 0$. Furthermore, we have:
\begin{eqnarray*}
&&2 \psi p_1 g_1^b + \frac{2 c_1 {g_1^b}^2 p_1}{a_1 + g_1^b} - \psi p_1 - \frac{c_1 g_1^b p_1}{a_1 + g_1^b} + \frac{q^b a_1 (d_1(g_4^b,y^b)(a_1 \psi + c_1 g_1^b + g_1^b \psi)}{(a_1 + g_1^b)^3}> \frac{(d_1(g_4^b,y^b) g_1^b c_1 a_1 q^b}{(a_1 + g_1^b)^3}
\end{eqnarray*}
So, $e_{66}e_{11} > e_{16}e_{61}$. 
Now, from Equation (\ref{ux}), we obtain the eigenvalues $\lambda_{6,7}$ as follows:

$$
\lambda_{6,7} = \frac{-w_1 \pm \sqrt{w_1^2 - 4(1)w_2}}{2(1)} = \frac{-w_1 \pm \sqrt{w_1^2 - 4 w_2}}{2}.
$$
The real part of $\lambda_6$ and $\lambda_7$ is always negative because if $w_1^2 - 4 w_2 > 0$, then $w_1^2 > w_1^2 - 4 w_2$, which implies:

$$
\begin{aligned}
& \lambda_6 = \frac{-w_1 + \sqrt{w_1^2 - 4 w_2}}{2} < 0 \\
& \lambda_7 = \frac{-w_1 - \sqrt{w_1^2 - 4 w_2}}{2} < 0 .
\end{aligned}
$$
Additionally, if $w_1^2 < 4 w_2$, then $\sqrt{w_1^2 - 4 w_2}$ is imaginary, and since $w_1 > 0$, $-w_1 < 0$. Because $\lambda_i < 0$ for every $i = 1, 2, 3, 4, 5, 6, 7$, the equilibrium point $E_1$ is locally asymptotically stable.
Therefore, the stability of the second free equilibrium point of glioma signifies a state in which glioma cells no longer exist within the body of the glioma patient. This indicates that chemotherapy and anti-angiogenic therapy at specific dosages can suppress glioma growth while ensuring that all neuronal cells are spared from any adverse effects.
\end{proof}
In the next section we will give some numerical simulation
to illustrate the theoretical results for several case.

\section{Numerical Result}
In this section, we will showcase several numerical simulations aimed at illustrating the theoretical findings discussed in the preceding section. The parameter values employed in the system, as detailed in Table (\ref{table_2}), were acquired from the work of \cite{tro2020,pinho2012}.
\begin{table}[!t]
\renewcommand{\arraystretch}{1.3}
\caption{Parameters of the model.}
\label{table_2}
\centering
\begin{tabular}{|c||c||c|}
\hline
\textbf{Parameter} & \textbf{Value} & \textbf{Reference} \\
\hline
$ p_1 $ & $ 0.0068 \text{ day}^{-1} $ & $ p_1<p_2 $ \cite{pinho2012}  \\
\hline
$ p_2 $ & $ 0.012 \text{ day}^{-1} $ & \cite{tro2020} \\
\hline
$ p_3 $ & $ 0.002 \text{ day}^{-1} $ & \cite{tro2020} \\
\hline
$ p_4 $ & $ 0.002 \text{ day}^{-1} $ & $ p_4<p_1 $ \cite{alb2002} \\
\hline
$ k_i,i=1,2,3,4 $ & $ 510 \text{ kg.m}^{-3} $ & \cite{tro2020}  \\
\hline
$ A_i,i=1,2,4,5 $ & $ 510 \text{ kg.m}^{-3} $ & \cite{tro2020}  \\
\hline
$ \kappa_1 $ & $ 3.6 \times 10^{-5} \text{ day}^{-1} $ & \cite{pinho2012}
 \\
\hline
$ \kappa_2,\kappa_3 $ & $ 3.6 \times 10^{-6} \text{ day}^{-1} $ & $ \kappa_2,\kappa_3 <\kappa_1 $ \cite{pinho2012}  \\
\hline
$ \chi $ & $ 0.15  $ & $ \chi<1 $ \cite{pinho2012} \\
\hline
$ \Phi $ & $ 0.004 \text{ day}^{-1} $ & $ \Phi>p_4 $ \cite{sa2001} \\
\hline
$ c_1 $ & $ 0.0002 \text{ day}^{-1} $ & \cite{pinho2012}
 \\
\hline
$ c_2 $ & $ 0.032 \text{ day}^{-1} $ & $ c_2 \gg c_1 $ \cite{pinho2012}  \\
\hline
$ c_4 $ & $ 0.032 \text{ day}^{-1} $ & $ c_4 \geq c_2 $ \cite{hana1996} \\
\hline
$ c_5 $ & $ 0.0012 \text{ day}^{-1} $ & $ c_5 \geq c_1 $ \\
\hline
$ u $ & $ 0-1 $ & \cite{tro2020} \\
\hline
$ \rho $ & $ 0-1 $ & \cite{boc2018} \\
\hline
$ \omega $ & $0-0.02  $ & \cite{tro2020} \\
\hline
$ D_{10} $ & $ 2.4 \times 10^{-5} \text{ day}^{-1} $ &\cite{pinho2012}
 \\
\hline
$ D_{20} $ & $ 4.0 \times 10 \text{ day}^{-1} $ & $ D_{20} \gg D_{10} $ \cite{sil1987}\\
\hline
$ D_{50} $ & $ 2.4  \text{ day}^{-1} $ & $ D_{20} > D_{50} \gg D_{10} $ \\
\hline
$ D_4 $ & $ 3.6\times 10^{2} \text{ day}^{-1} $ & $ D_3 > D_{20} $ \cite{pinho2012} \\
\hline
$ D_{11} $ & $ 4.0 \times 10^{-8} \text{ day}^{-1} $ & $ D_{11} < D_{10} $ \cite{pinho2012} \\
\hline
$ D_{21} $ & $ 4.0 \times 10^{-2} \text{ day}^{-1} $ & $ D_{21} > D_{11} $  \cite{pinho2012}\\
\hline
$ D_{51} $ & $ 4.0 \times 10^{-3} \text{ day}^{-1} $ & $ D_{21} > D_{51} > D_{11} $ \\
\hline
$ D_{12} $ & $ 2.0\times 10^{-5} \text{ day}^{-1} $ & $ D_{12} < D_{10} $ \cite{pinho2012} \\
\hline
$ D_{22} $ & $ 3.8\times10^{3} \text{ day}^{-1} $ & $ D_{22} > D_{12} $ \cite{pinho2012} \\
\hline
$ D_{52} $ & $ 2.0 \text{ day}^{-1} $ & $ D_{22} > D_{52} > D_{12} $ \\
\hline $ \phi $ & $ 3.3 \times 10^{-3} \text{ day}^{-1} $ & Almost continuous \cite{bow2001} \\
\hline $ \psi $ & $ 0.01813 \text{ day}^{-1} $ & Half-life of CA \cite{said2007} \\
\hline $ \delta $ & $ 2.4 \times 10^{-4} \text{ day}^{-1} $ & $ \phi=14 \delta $ \cite{bow2001} \\
\hline $ \gamma $ & $ 0.136  \text{ day}^{-1}$ & $ \gamma=7.5 \psi $ \cite{sus2001} \\
\hline
\end{tabular}
\end{table}

\begin{table}[!t]
\renewcommand{\arraystretch}{1.3}
\caption{Parameters of the non-dimensionalization model.}
\label{table2}
\centering
\begin{tabular}{|c||c|}
\hline
\textbf{Parameter} & \textbf{Value} \\
\hline
$ a_i,i=1,2,4,5 $ & $ 1 $  \\
\hline
$ \beta_1 $ & $ 1.8 \times 10^{-2} $ \\
\hline
$ \beta_2,\beta_3 $ & $ 1.8 \times 10^{-3} $ \\
\hline
$ \tau $ & $ 0.15 $ \\
\hline
$ \mu $ & $ 0.004 $ \\
\hline
$ \alpha $ & $0-10$ \\
\hline
$ d_{10} $ & $ 4.7 \times 10^{-8} $ \\
\hline
$ d_{20} $ & $ 7.8 \times 10^{-2} $ \\
\hline
$ d_{50} $ & $ 4.7 \times 10^{-3} $ \\
\hline
$ d_4 $ & $ 0.71 $ \\
\hline
$ d_{11} $ & $ 4.0 \times 10^{-8} $ \\
\hline
$ d_{21} $ & $ 4.0 \times 10^{-2} $ \\
\hline
$ d_{51} $ & $ 4.0 \times 10^{-3} $ \\
\hline
$ d_{12} $ & $ 3.9 \times 10^{-8} $ \\
\hline
$ d_{22} $ & $ 7.5 $ \\
\hline
$ d_{52} $ & $ 3.9 \times 10^{-3} $ \\
\hline 
\hline
\end{tabular}
\end{table}

\subsection{Numerical Simulation and Biological Interpretation of the Second Glioma Free Equilibrium Points}

In the case of tumor-free conditions, there are no tumor cells within the brain tissue, meaning there are no sensitive glioma cells nor resistant glioma cells ($g_2=0$ and $g_3=0$). Subsequently, a numerical simulation is conducted based on the parameter values in Table \ref{table2} to assess the stability of the free glioma equilibrium point $E_1= (0.99,0,0,0.65,0,0.18,0.0016)$ in the phase portrait and trajectory diagram. Therefore, in the following simulation, the values of $u = 0.001, \alpha=2$ and $\rho = 0.01$ are chosen. 
\begin{figure}[!t]
\centering
\includegraphics[width=4in]{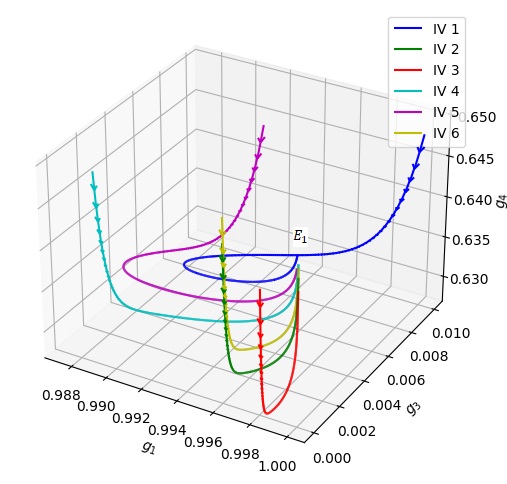}
\caption{Phase portrait of the glioma free equilibrium point $E_1= (0.99,0,0,0.65,0,0.18,0.0016)$}
\label{fig2}
\end{figure}
 Figure \ref{fig2} shows that there exists a single equilibrium point. This equilibrium point is locally asymptotically stable and has a stable \textit{node} shape. When different initial values for the concentrations of glial cells, resistant glioma cells, and endothelial cells are taken, they converge to a certain value. 
The local stability analysis around this equilibrium point is provided by the Jacobian matrix below.
$$
D f\left(E_1\right)\\={\tiny
\left[
\begin{array}{ccccccc}
-0.006 & -0.018 & -0.018 & -3.620 \times 10^{-9} & 0 & -3.667 \times 10^{-8} & -3.549 \times 10^{-9} \\
0 & -0.021 & 0 & 0 & 0 & 0 & 0 \\
0 & 0.001 & -0.009 & 0 & 0 & 0 & 0 \\
0 & 0.004 & 0.004 & -0.001 & 0 & 0 & -0.279 \\
0 & 0 & 0 & 0 & -0.001 & 0 & 0 \\
-9.051 \times 10^{-6} & -0.005 & 0 & 0 & -0.000 & -0.018 & 0 \\
0 & 0 & 0 & -0.000 & 0 & 0 & -0.148 \\
\end{array}
\right]
}
 \text {. }
$$
As a result, the eigenvalues of $D f\left(E_1\right)$ are calculated as follows: $\lambda_1 = -0.001$, $\lambda_2 = -0.149$, $\lambda_3 = -0.001$, $\lambda_4 = -0.009$, $\lambda_5 = -0.022$, $\lambda_6 = -0.018$, and $\lambda_7 = -0.007$. Since the real parts of all $\lambda_i$ are negative ($\operatorname{Re}\left(\lambda_i\right) < 0$), the equilibrium point $E_1 = (0.99, 0, 0, 0.65, 0, 0.18, 0.0016)$ is locally asymptotically stable.

Furthermore, a trajectory diagram of cell concentrations, including glial cells, sensitive glioma cells, resistant glioma cells, endothelial cells, and neurons, concerning initial conditions, is provided. Here, the authors depict the trajectory diagram of equilibrium points for free glioma at $t=0-10000$.
\begin{figure}[!t]
\centering
\includegraphics[width=4in]{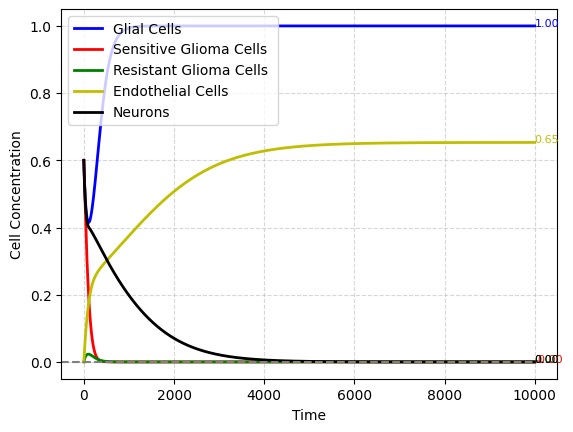}
\caption{Trajectory diagram of free glioma equilibrium points ($t=0-10000$)}
\label{fig_traye1}
\end{figure}

In Figure \ref{fig4}, it can be observed that as time approaches infinity ($t \to \infty$), the concentration of glial cells initially decreases over the course of 100 days. Then, around day 101, it starts to increase until it reaches the value of $g_1=0.99$ or a concentration of glial cells equal to $0.99$, remaining constant thereafter and approaching nearly twice its initial value for an extended period. On the other hand, the concentration of sensitive glioma cells continually decreases from the outset until it approaches zero over the course of 200 days, extending to infinity ($t \to \infty$). Meanwhile, the concentration of resistant glioma cells initially increases, almost reaching a value of $0.2$ on day 75, after which its growth declines on day 76 and eventually approaches zero indefinitely. 

This results in stimulation of endothelial cells. As shown in Figure \ref{fig5}, the concentration of endothelial cells continues to increase every day, reaching a value of $0.65$ for an indefinite period. Subsequently, the concentration of neuron cells decreases in tandem with the decline in glial cells until day 100, after which it remains nearly constant and approaches zero indefinitely.

Therefore, the stable equilibrium point of the second free glioma represents a condition where glioma cells no longer exist in the body of glioma patients. This indicates that chemotherapy and anti-angiogenic therapy with specific dosages can suppress glioma growth, with the side effect of eliminating all neuron cells as well.
\begin{figure}[!t]
\centering
\includegraphics[width=3in]{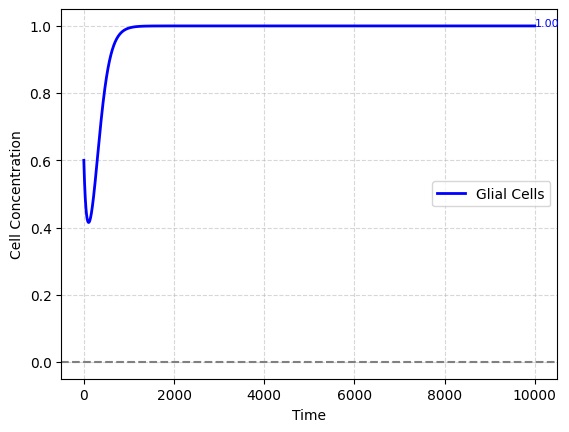}
\includegraphics[width=3in]{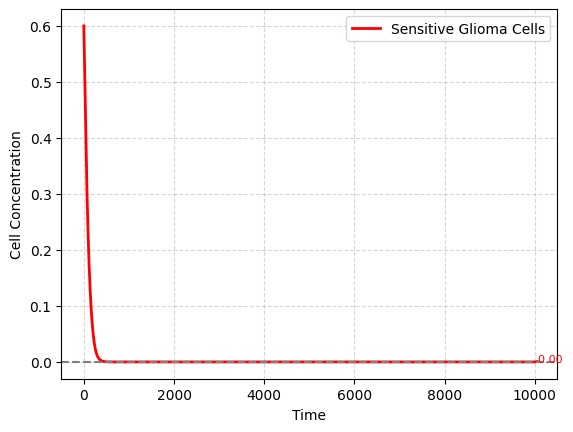}
\includegraphics[width=3in]{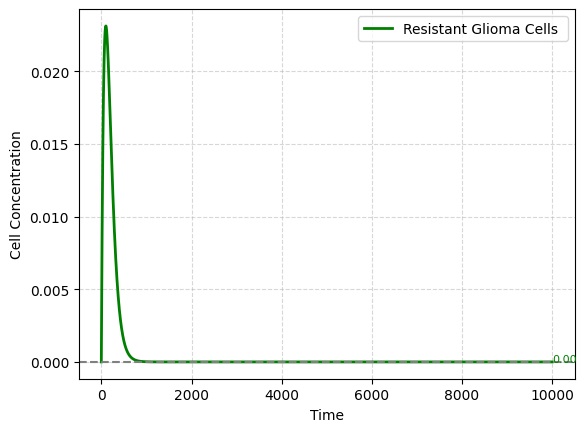}
\caption{Trajectory diagram of free glioma equilibrium points for glial cells, sensitive glioma cells, and resistant glioma cells}
\label{fig4}
\end{figure}

\begin{figure}[!t]
\centering
\includegraphics[width=3in]{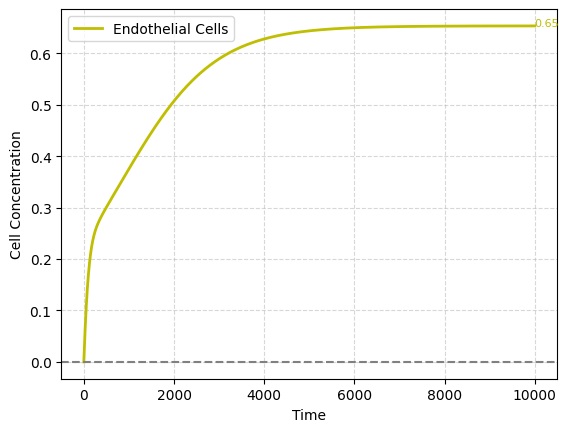}
\includegraphics[width=3in]{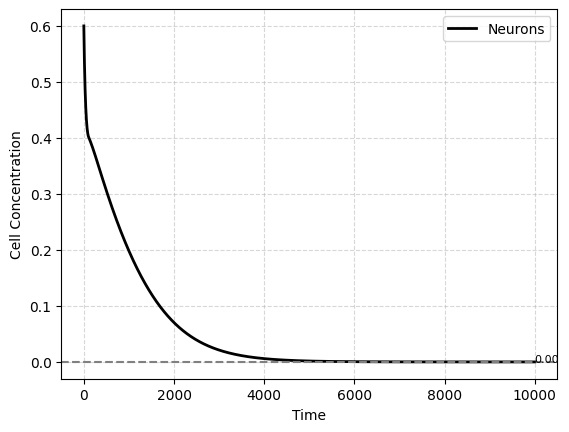}
\caption{Trajectory diagram of free glioma equilibrium points for endothelial cells and neurons}
\label{fig5}
\end{figure}

\subsection{Numerical Simulation and Biological Interpretation of the Equilibrium Point of Resistant Glioma}

In the case of tumor resistance, tumor cells still exist within the brain tissue, meaning that resistant glioma cells persist in the body of glioma patients. Subsequently, a numerical simulation is performed based on the parameter values in Table \ref{table2}, and parameters corresponding to the existence of the equilibrium point of resistant glioma are selected to assess the stability of the equilibrium point of free resistant glioma $E_2=(0,0,g_3^r,g_4^r,0,q^r,y^r)$ with $g_3^r,g_4^r,q^r$, and $y^r$ given in section (\ref{sec:3}).

\begin{figure}[!t]
\centering
\includegraphics[width=4in]{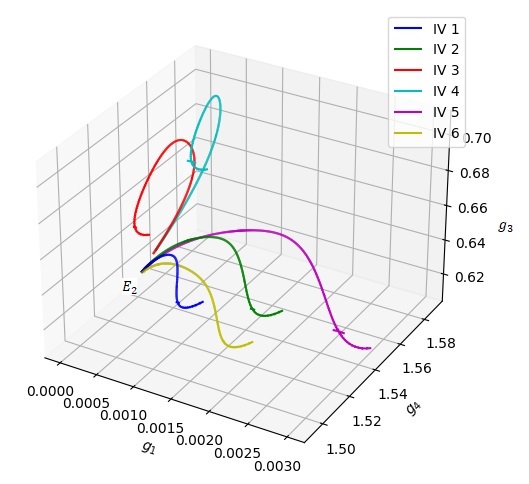}
\caption{Phase Portrait of Resistant Glioma Equilibrium Points $E_2=(0,0,g_3^r,g_4^r,0,q^r,y^r)$}
\label{ppres}
\end{figure}

To obtain an endemic case, we choose the values $p_3 = 0.006$, $u = 0.01$, $\rho = 0.003$, $\phi = 4.0 \times 10^{-3}$, $\alpha=2$ and $\delta = 2.9 \times 10^{-4}$. In Figure \ref{ppres}, it can be shown that there is one equilibrium point that emerges, which is the equilibrium point for resistant glioma, denoted as $E_2 = (0, 0, g_3^r, g_4^r, 0, q^r, y^r)$. This equilibrium point is locally asymptotically stable and has a spiral shape. When different initial values are chosen for the concentrations of glial cells, resistant glioma cells, and endothelial cells, they all converge to a certain value. The local stability analysis around this equilibrium point is provided by the Jacobian matrix below.
$$
D f\left(E_2\right)\\={\tiny
\left[
\begin{array}{ccccccc}
-0.0046 & 0 & 0 & 0 & 0 & 0 & 0 \\
0 & -0.0400 & 0 & 0 & 0 & 0 & 0 \\
-0.0011 & 0.0070 & -0.0030 & 0.0002 & 0 & 0 & 0 \\
0 & 0.0040 & 0.0040 & -0.0044 & 0 & 0 & -0.4280 \\
0 & 0 & 0 & 0 & -0.0024 & 0 & 0 \\
-0.00004 & -0.0070 & 0 & 0 & -0.0003 & -0.01813 & 0 \\
0 & 0 & 0 & -9.256 \times 10^{-6} & 0 & 0 & -0.1554\\
\end{array}
\right]
}
 \text {. }
$$
Subsequently, the eigenvalues were obtained as follows: $\lambda_1=-0.03996$, $\lambda_2=-0.0181$, $\lambda_3=-0.1554 + 2.9 \times 10^{-27}i$, $\lambda_4=-0.0025 + 1.98 \times 10^{-25}i$, $\lambda_5=-0.0048 - 2 \times 10^{-25}i$, $\lambda_6=-0.0024$, and $\lambda_7=-0.0046$. Since each $\operatorname{Re}(\lambda_i)$ for $i=1,2,3,4,5,6,7$ is negative, the equilibrium point $E_2=(0,0,g_3^r,g_4^r,0,q^r,y^r)$ is locally asymptotically stable.

Furthermore, trajectory diagrams of cell concentrations, including glial cells, sensitive glioma cells, resistant glioma cells, endothelial cells, and neurons, concerning initial conditions, are provided. In Figure \ref{trayres}, we shows the trajectory diagram of the equilibrium point of resistant glioma at $t=0-10000$.

\begin{figure}[!t]
\centering
\includegraphics[width=4in]{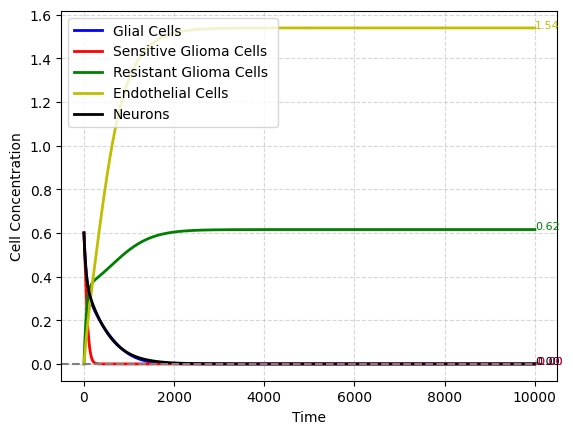}
\caption{ Trajectory diagram of resistant glioma equilibrium points ($t=0-10000$)}
\label{trayres}
\end{figure}

 It is further clarified in Figure \ref{fig_8} that as $t \to \infty$, the variables $g_1$ and $g_2$ approach zero. The concentration of glial cells decreases to nearly zero over an indefinite period. Meanwhile, the concentration of sensitive glioma cells continues to decrease initially until it approaches zero by $t=200$ days and continues to zero for an indefinite time ($t \to \infty$). Furthermore, the concentration of resistant glioma cells increases for 1400 days and subsequently remains constant at $0.62$ in the indefinite time ($t \to \infty$). This is reasonable as it falls within the capacity limits of resistant glioma cell growth.

Figure \ref{fig_9} displays the trajectory diagram for variables $g_4$ and $g_5$. The continuous increase in resistant glioma cells stimulates angiogenesis processes in the brain, resulting in the stimulation of endothelial cell formation, leading to a significantly higher concentration of endothelial cells, exceeding twice the concentration of resistant glioma cells, at $1.54$.

Therefore, the stable equilibrium point of resistant glioma depicts the presence of resistant tumor cells that persist within the brain tissue but does not surpass the threshold for resistant glioma cell growth ($k_2=1$) while considering specific chemotherapy and anti-angiogenic dosages.

\begin{figure}[!t]
\centering
\includegraphics[width=3in]{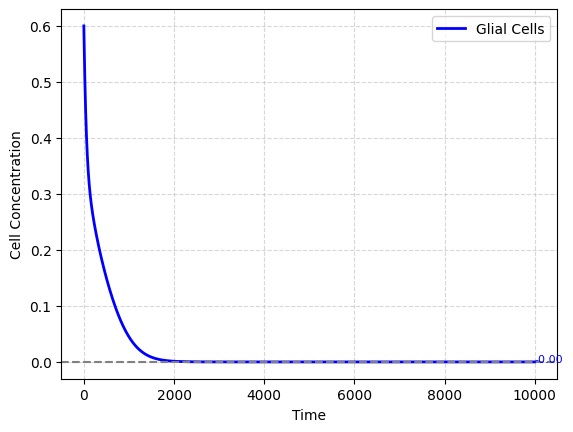}
\includegraphics[width=3in]{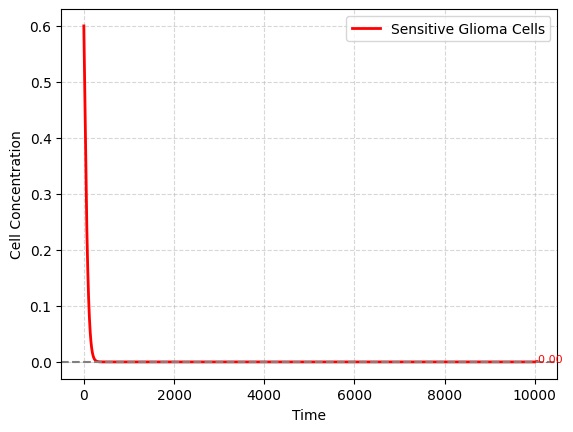}
\includegraphics[width=3in]{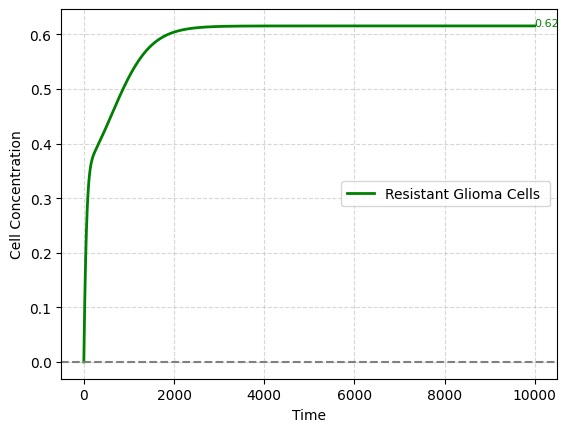}
\caption{Trajectory diagram of resistant glioma equilibrium points for glial cells, sensitive glioma cells, and resistant glioma cells}
\label{fig_8}
\end{figure}

\begin{figure}[!t]
\centering
\includegraphics[width=3in]{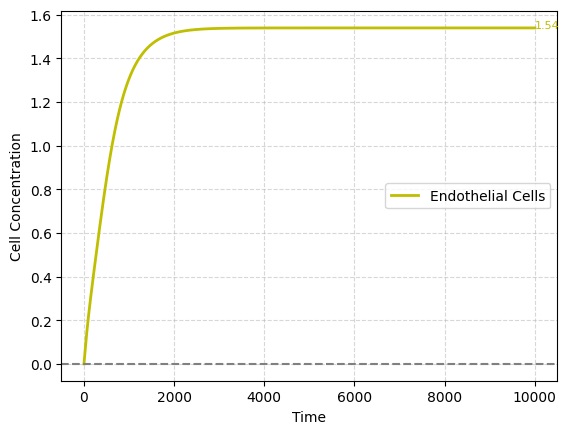}
\includegraphics[width=3in]{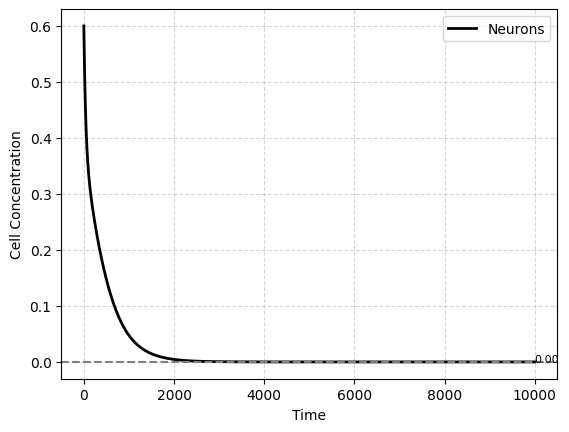}
\caption{Trajectory diagram of resistant glioma equilibrium points for endothelial cells and neurons}
\label{fig_9}
\end{figure}

\subsection{The Behavior of Subpopulations of Sensitive Glioma Cells and Resistant Glioma Cells with Varying Transition Rates to Dormancy Phase}

\begin{figure}[!t]
\centering
\includegraphics[width=4in]{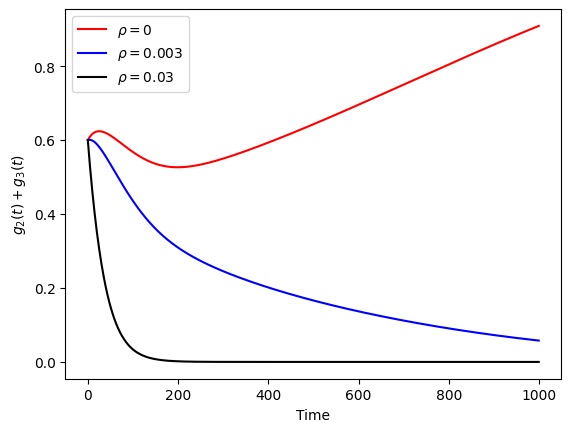}
\caption{ Time evolution of $g_2(t)+g_3(t)$ for various values of $\rho$.}
\label{vber}
\end{figure}
Based on the graph in Figure \ref{vber} above, two cases can be considered:
\begin{itemize}
    \item Case 1: If $\rho \geq p_3$. From Figure \ref{vber}, it can be observed that the concentration of glioma cells gradually decreases when the transition rate of cells to the dormant phase ($\rho$) is greater than or equal to the proliferation rate of resistant glioma cells ($p_3$). This implies the possibility of successful chemotherapy and anti-angiogenic therapy. When $\rho > p_3$, as time $\mathrm{t}$ approaches infinity, the concentration of glioma cells tends to be unstable because the proliferation of resistant glioma cells exceeds the rate of transition to the dormant phase.
    \item Case 2: If $\rho < p_3$. From Figure \ref{vber}, it can be observed that the concentration of glioma cells will increase when the transition rate of cells to the dormant phase ($\rho$) is less than the proliferation rate of resistant glioma cells ($p_3$). This implies the success of chemotherapy and anti-angiogenic therapy in reducing the maximum concentration of glioma cells by $1-\frac{(g_4 \tau + 1)(p_3-\rho)}{p_3}$.
\end{itemize}

\section{Conclusion}
In conclusion, this research paper provides a comprehensive exploration of the multifaceted dynamics surrounding glioma brain tumors and their responsiveness to chemotherapy, with a particular emphasis on the crucial role of anti-angiogenic therapy in the treatment process. The investigation commences by establishing a mathematical model to simulate the intricate interactions taking place within the tumor microenvironment. This model incorporates various factors, such as the diffusion of chemotherapy agents, proliferation of tumor cells, and the influence of anti-angiogenic drugs, making it a valuable tool for assessing the efficacy of treatment strategies. One of the pivotal aspects addressed in this study is the identification and characterization of equilibrium points within the mathematical model. Equilibrium points represent states in which the system remains stable over time, providing critical insights into the potential outcomes of chemotherapy and anti-angiogenic therapy. Through rigorous analysis, it becomes apparent that equilibrium points can vary considerably based on parameter values, specifically infusion rates and the strengths of cellular interactions. This variation highlights the sensitivity of the system to these factors and underscores the need for precise therapeutic planning.

Furthermore, the analysis sheds light on the attainability and stability of equilibrium points that signify tumor eradication. Notably, it is discerned that achieving a tumor-free state often requires exceptionally high infusion rates of chemotherapy agents. This revelation has significant implications for clinical practice, suggesting that complete tumor elimination through chemotherapy alone may be a challenging endeavor due to potential toxicity concerns and limitations in drug delivery systems. In this context, the study underlines the importance of considering a comprehensive treatment approach that combines chemotherapy with anti-angiogenic therapy. Anti-angiogenic drugs, designed to inhibit the formation of blood vessels within tumors, can complement chemotherapy by altering the tumor microenvironment, potentially reducing the required chemotherapy dosage and minimizing side effects. This synergistic approach addresses the issue of drug resistance often encountered in glioma treatment.

In summary, this research underscores the intricate and interdependent nature of glioma treatment. It emphasizes the need for personalized therapeutic strategies that take into account the unique parameters of each patient's tumor and considers the delicate balance between chemotherapy and anti-angiogenic therapy. By offering a deeper understanding of the mathematical foundations governing these treatments, this study contributes valuable insights to the ongoing efforts to improve the prognosis and quality of life for individuals facing the formidable challenge of glioma brain tumors.

\bibliographystyle{unsrt}  
\bibliography{references}

\end{document}